\tikzstyle{tri}=[regular polygon,regular polygon sides=3,shape border rotate=1
\tikzstyle{simple}=[-,line width=2.000]
\tikzstyle{arrow}=[-,postaction={decorate},decoration={markings,mark=at position .5 with {\arrow{>}}},line width=1.100]
\tikzstyle{none}=[inner sep=-1pt]
\definecolor{lblue}{rgb}{0,250,255}
\definecolor{llblue}{HTML}{a1ddde}
\definecolor{red}{rgb}{0.8,0,0}
\tikzstyle{species}=[circle, fill=yellow, draw=black, scale=1]
\tikzstyle{catalyst}=[circle, fill=yellow, draw=red, scale=1]
\tikzstyle{transition}=[rectangle, fill=lblue, draw=black, scale=1]
\tikzstyle{morphism}=[rectangle, fill=llblue, draw=black, scale=1]
\tikzstyle{empty}=[circle,fill=none, draw=none]
\tikzstyle{inputdot}=[circle,fill=black,draw=black, scale=.5]
\tikzstyle{dot}=[circle,fill=black,draw=black]
\tikzstyle{bounding}=[circle,dashed, fill=none,draw=black, scale=9.00]
\tikzstyle{simple}=[-,draw=black,line width=1.000]
\tikzstyle{inarrow}=[-,draw=black,postaction={decorate},decoration={markings,mark=at position .5 with {\arrow{>}}},line width=1.000]
\tikzstyle{tick}=[-,draw=black,postaction={decorate},decoration={markings,mark=at position .5 with {\draw (0,-0.1) -- (0,0.1);}},line width=1.000]
\tikzstyle{inputarrow}=[->,draw=black, shorten >=.05cm]
\newcommand{\maps}{\colon}
\newcommand{\N}{\mathbb{N}}
\newcommand{\define}[1]{{\bf \boldmath{#1}}}
\newcommand{\namedcat}[1]{\mathsf{#1}}
\renewcommand{\S}{\mathsf{S}}
\newcommand{\Petri}{\namedcat{Petri}}
\newcommand{\Cat}{\namedcat{Cat}}
\newcommand{\CMC}{\namedcat{CMC}}
\newcommand{\CMon}{\namedcat{CMon}}
\newcommand{\PreMonCat}{\mathsf{PreMonCat}}
\newcommand{\Set}{\namedcat{Set}}
\newcommand{\C}{\namedcat{C}}
\newcommand{\D}{\namedcat{D}}
\newcommand{\ob}{\mathrm{Ob}}
\newcommand{\mor}{\mathrm{Mor}}
\newcommand{\beq}{\begin{equation}}
\newcommand{\eeq}{\end{equation}}
\newcommand{\beqa}{\begin{eqnarray}}
\newcommand{\eeqa}{\end{eqnarray}}
\theoremstyle{plain}
\newtheorem{thm}{Theorem}
\newtheorem{prop}[thm]{Proposition}
\newtheorem{defn}[thm]{Definition}
\newtheorem{example}[thm]{Example}
\theoremstyle{definition}
\theoremstyle{remark}
\definecolor{joecolor}{rgb}{0.0, 0.0, 1.0}
\@date \else {\vskip3ex \centering\footnotesize\@date\par\vskip1ex}\fi
\else \@footnotetext{\@setdate}\fi}
\begin{document}

\title{Network Models from Petri Nets with Catalysts}

\author{John C.\ Baez} 
\affiliation{Department of Mathematics, University of California, Riverside CA, 92521, USA}
\affiliation{Centre for Quantum Technologies, National University of Singapore, 117543, Singapore}
\email{baez@math.ucr.edu}

\author{John Foley}
\affiliation{Metron, Inc., 1818 Library St., Suite 600, Reston, VA 20190, USA}
\email{foley@metsci.com}

\author{Joe Moeller}
\affiliation{Department of Mathematics, University of California, Riverside CA, 92521, USA}
\email{moeller@math.ucr.edu}


\maketitle

\begin{abstract}
Petri networks and network models are two frameworks for the compositional design of systems of interacting entities. Here we show how to combine them using the concept of a `catalyst': an entity that is neither destroyed nor created by any process it engages in. In a Petri net, a place is a catalyst if its in-degree equals its out-degree for every transition. We show how a Petri net with a chosen set of catalysts gives a network model. This network model maps any list of catalysts from the chosen set to the category whose morphisms are all the processes enabled by this list of catalysts. Applying the Grothendieck construction, we obtain a category fibered over the category whose objects are lists of catalysts. This category has as morphisms all processes enabled by \emph{some} list of catalysts. While this category has a symmetric monoidal structure that describes doing processes in parallel, its fibers also have premonoidal structures that describe doing one process and then another while reusing the catalysts. 
\end{abstract}

\section{Introduction}

Petri nets are a widely studied formalism for describing collections of entities of different types, and how they turn into other entities \cite{GiraultValk, Peterson}. Network models are a formalism for designing and tasking networks of agents   \cite{NetworkModels, NoncommNetMods}. Here we combine the two. This is worthwhile because while both formalisms involve networks, they serve different functions, and are in some sense complementary.

A Petri net can be drawn as a bipartite directed graph with vertices of two kinds: `places', drawn as circles below, and `transitions' drawn as squares:
\[
\scalebox{0.8}{
\begin{tikzpicture}
	\begin{pgfonlayer}{nodelayer}
        \node [style=species] (C) at (-1, 0) {$\quad\;$};
        \node [style=species] (B) at (-4, -0.5) {$\quad\;$};
        \node [style=species] (A) at (-4, 0.5) {$\quad\;$};
        \node [style=transition] (tau1) at (-2.5, 0.6) {$\big.\;\;\;\, $};
        \node [style=transition] (tau2) at (-2.5, -0.7) {$\big.\;\;\;\, $};
	\end{pgfonlayer}
	\begin{pgfonlayer}{edgelayer}
        \draw [style=inarrow] (A) to (tau1);
        \draw [style=inarrow] (B) to (tau1);
        \draw [style=inarrow, bend left=15, looseness=1.00] (tau1) to (C);
        \draw [style=inarrow, bend left=15, looseness=1.00] (C) to (tau2);
        \draw [style=inarrow, bend left=15, looseness=1.00] (tau2) to (B); 
        \draw [style=inarrow, bend right =15, looseness=1.00] (tau2) to (B); 
	\end{pgfonlayer}
\end{tikzpicture}
}
\]
In applications to chemistry, places are also called `species'. When we run a Petri net, we start by placing a finite number of `tokens' in each place:
\[
\scalebox{0.8}{
\begin{tikzpicture}
	\begin{pgfonlayer}{nodelayer}
		\node [style=species] (C) at (-1, 0) {$\quad\;$};
		\node [style=species] (B) at (-4, -0.5) {$\;\bullet\;$};
		\node [style=species] (A) at (-4, 0.5) {$\bullet\bullet$};
		\node [style=transition] (tau1) at (-2.5, 0.6) {$\big.\;\;\;\, $};
        \node [style=transition] (tau2) at (-2.5, -0.7) {$\big.\;\;\;\, $};
	\end{pgfonlayer}
	\begin{pgfonlayer}{edgelayer}
		\draw [style=inarrow] (A) to (tau1);
		\draw [style=inarrow] (B) to (tau1);
		\draw [style=inarrow, bend left=15, looseness=1.00] (tau1) to (C);
        \draw [style=inarrow, bend left=15, looseness=1.00] (C) to (tau2);
        \draw [style=inarrow, bend left=15, looseness=1.00] (tau2) to (B); 
        \draw [style=inarrow, bend right =15, looseness=1.00] (tau2) to (B); 
	\end{pgfonlayer}
\end{tikzpicture}
}
\]
This is called a `marking'. Then we repeatedly change the marking using the transitions. For example, the above marking can change to this:
\[
\scalebox{0.8}{
\begin{tikzpicture}
	\begin{pgfonlayer}{nodelayer}
        \node [style=species] (C) at (-1, 0) {$\;\bullet\;$};
        \node [style=species] (B) at (-4, -0.5) {$\quad\;$};
        \node [style=species] (A) at (-4, 0.5) {$\;\bullet\;$};
        \node [style=transition] (tau1) at (-2.5, 0.6) {$\big.\;\;\;\, $};
        \node [style=transition] (tau2) at (-2.5, -0.7) {$\big.\;\;\;\, $};
	\end{pgfonlayer}
	\begin{pgfonlayer}{edgelayer}
		\draw [style=inarrow] (A) to (tau1);
		\draw [style=inarrow] (B) to (tau1);
		\draw [style=inarrow, bend left=15, looseness=1.00] (tau1) to (C);
        \draw [style=inarrow, bend left=15, looseness=1.00] (C) to (tau2);
        \draw [style=inarrow, bend left=15, looseness=1.00] (tau2) to (B); 
        \draw [style=inarrow, bend right =15, looseness=1.00] (tau2) to (B); 
	\end{pgfonlayer}
\end{tikzpicture}
}
\]
and then this:
\[
\scalebox{0.8}{
\begin{tikzpicture}
	\begin{pgfonlayer}{nodelayer}
		\node [style=species] (C) at (-1, 0) {$\quad\;$};
		\node [style=species] (B) at (-4, -0.5) {$\bullet\bullet$};
		\node [style=species] (A) at (-4, 0.5) {$\;\bullet\;$};
		\node [style=transition] (tau1) at (-2.5, 0.6) {$\big.\;\;\;\, $};
        \node [style=transition] (tau2) at (-2.5, -0.7) {$\big.\;\;\;\, $};
	\end{pgfonlayer}
	\begin{pgfonlayer}{edgelayer}
		\draw [style=inarrow] (A) to (tau1);
		\draw [style=inarrow] (B) to (tau1);
		\draw [style=inarrow, bend left=15, looseness=1.00] (tau1) to (C);
        \draw [style=inarrow, bend left=15, looseness=1.00] (C) to (tau2);
        \draw [style=inarrow, bend left=15, looseness=1.00] (tau2) to (B); 
        \draw [style=inarrow, bend right =15, looseness=1.00] (tau2) to (B); 
	\end{pgfonlayer}
\end{tikzpicture}
}
\]
Thus, the places represent different \emph{types} of entity, and the transitions describe ways that one collection of entities of specified types can turn into another such collection. 

Network models serve a different function than Petri nets: they are a general tool for working with networks of many kinds. Mathematically a network model is a lax symmetric monoidal functor $G \maps \S(C) \to \Cat$, where $\S(C)$ is the free strict symmetric monoidal category on a set $C$. Elements of $C$ represent different kinds of `agents'. Unlike in a Petri net, we do not usually consider processes where these agents turn into other agents. Instead, we wish to study everything that can be done with a fixed collection of agents. Any object $x \in \S(C)$ is of the form $c_1 \otimes \cdots \otimes c_n$ for some $c_i \in C$; thus, it describes a collection of agents of various kinds. The functor $G$ maps this object to a category $G(x)$ that describes everything that can be done with this collection of agents. 

In many examples considered so far, $G(x)$ is a category whose morphisms are graphs whose nodes are agents of types $c_1, \dots, c_n$. Composing these morphisms corresponds to `overlaying' graphs. Network models of this sort let us design networks where the nodes are agents and the edges are communication channels or shared commitments. In our first paper the operation of overlaying graphs was always commutative \cite{NetworkModels}. Subsequently we introduced a more general noncommutative overlay operation  \cite{NoncommNetMods}. This lets us design networks where each agent has a limit on how many communication channels or commitments it can handle; the noncommutativity allows us to take a `first come, first served' approach to resolving conflicting commitments.

Here we take a different tack: we instead take $G(x)$ to be a category whose morphisms are \emph{processes that the given collection of agents, $x$, can carry out}. Composition of morphisms corresponds to carrying out first one process and then another.

This idea meshes well with Petri net theory, because any Petri net $P$ determines a symmetric monoidal category $FP$ whose morphisms are processes that can be carried out using this Petri net. More precisely, the objects in $FP$ are markings of $P$, and the morphisms are sequences of ways to change these markings using transitions, e.g.:
\[
\scalebox{0.8}{
\begin{tikzpicture}
	\begin{pgfonlayer}{nodelayer}
		\node [style=species] (C) at (-1, 0) {$\;\bullet\;$};
		\node [style=species] (B) at (-4, -0.5) {$\;\bullet\;$};
		\node [style=species] (A) at (-4, 0.5) {$\;\bullet\;$};
		\node [style=transition] (tau1) at (-2.5, 0.6) {$\big.\;\;\;\, $};
        \node [style=transition] (tau2) at (-2.5, -0.7) {$\big.\;\;\;\, $};
	\end{pgfonlayer}
	\begin{pgfonlayer}{edgelayer}
		\draw[->, line width=1.00] (-0.3, 0) to (0.2, 0);
		\draw [style=inarrow] (A) to (tau1);
		\draw [style=inarrow] (B) to (tau1);
        \draw [style=inarrow, bend left=15, looseness=1.00] (tau1) to (C);
        \draw [style=inarrow, bend left=15, looseness=1.00] (C) to (tau2);
        \draw [style=inarrow, bend left=15, looseness=1.00] (tau2) to (B); 
        \draw [style=inarrow, bend right =15, looseness=1.00] (tau2) to (B); 
	\end{pgfonlayer}
\end{tikzpicture}
\begin{tikzpicture}
	\begin{pgfonlayer}{nodelayer}
        \node [style=species] (C) at (-1, 0) {$\bullet\bullet$};
        \node [style=species] (B) at (-4, -0.5) {$\quad\;$};
        \node [style=species] (A) at (-4, 0.5) {$\quad\;$};
        \node [style=transition] (tau1) at (-2.5, 0.6) {$\big.\;\;\;\, $};
        \node [style=transition] (tau2) at (-2.5, -0.7) {$\big.\;\;\;\, $};
        \node [style=empty] at (0, 0) {{$\to$}};
	\end{pgfonlayer}
	\begin{pgfonlayer}{edgelayer}
        \draw[->, line width=1.00] (-0.3, 0) to (0.2, 0);
        \draw [style=inarrow] (A) to (tau1);
        \draw [style=inarrow] (B) to (tau1);
        \draw [style=inarrow, bend left=15, looseness=1.00] (tau1) to (C);
        \draw [style=inarrow, bend left=15, looseness=1.00] (C) to (tau2);
        \draw [style=inarrow, bend left=15, looseness=1.00] (tau2) to (B); 
        \draw [style=inarrow, bend right =15, looseness=1.00] (tau2) to (B); 
	\end{pgfonlayer}
\end{tikzpicture}
\begin{tikzpicture}
	\begin{pgfonlayer}{nodelayer}
		\node [style=species] (C) at (-1, 0) {$\;\bullet\;$};
		\node [style=species] (B) at (-4, -0.5) {$\bullet\bullet$};
		\node [style=species] (A) at (-4, 0.5) {$\quad\;$};
		\node [style=transition] (tau1) at (-2.5, 0.6) {$\big.\;\;\;\, $};
        \node [style=transition] (tau2) at (-2.5, -0.7) {$\big.\;\;\;\, $};
	\end{pgfonlayer}
	\begin{pgfonlayer}{edgelayer}
		\draw [style=inarrow] (A) to (tau1);
		\draw [style=inarrow] (B) to (tau1);
        \draw [style=inarrow, bend left=15, looseness=1.00] (tau1) to (C);
        \draw [style=inarrow, bend left=15, looseness=1.00] (C) to (tau2);
        \draw [style=inarrow, bend left=15, looseness=1.00] (tau2) to (B); 
        \draw [style=inarrow, bend right =15, looseness=1.00] (tau2) to (B); 
	\end{pgfonlayer}
\end{tikzpicture}
}
\]

Given a Petri net, then, how do we construct a network model $G \maps \S(C) \to \Cat$, and in particular, what is the set $C$? In a network model the elements of $C$ represent different kinds of agents. In the simplest scenario, these agents persist in time. Thus, it is natural to take $C$ to be some set of `catalysts'. In chemistry, a reaction may require a catalyst to proceed, but it neither increases nor decrease the amount of this catalyst present. For a Petri net, `catalysts' are species that are neither increased nor decreased in number by any transition. For example, species $a$ is a catalyst in the following Petri net, so we outline it in red:
\[
\scalebox{0.8}{
\begin{tikzpicture}
	\begin{pgfonlayer}{nodelayer}
		\node [style=species] (C) at (-1, -0.1) {$\;\;c\;\;$};
		\node [style=species] (B) at (-4, -0.1) {$\;\;b\;\;$};
		\node [style=catalyst] (A) at (-2.5, 2) {$\;\;a\;\;$};
		\node [style=transition] (tau1) at (-2.5, 0.6) {$\;\phantom{\Big{|}}\tau_1\;$};
        \node [style=transition] (tau2) at (-2.5, -0.8){$\;\phantom{\Big{|}}\tau_2\;$};
	\end{pgfonlayer}
	\begin{pgfonlayer}{edgelayer}
		\draw [style=inarrow, bend right=70, looseness=1.00, red] (A) to (tau1);
		\draw [style=inarrow, bend left=15, looseness=1.00] (B) to (tau1);
		\draw [style=inarrow, bend right=70, looseness=1.00, red] (tau1) to (A);
		\draw [style=inarrow, bend left=15, looseness=1.00] (tau1) to (C);
	    \draw [style=inarrow, bend left=15, looseness=1.00] (C) to (tau2);
        \draw [style=inarrow, bend left=15, looseness=1.00] (tau2) to (B); 
	\end{pgfonlayer}
\end{tikzpicture}
}
\]
but neither $b$ nor $c$ is a catalyst. The transition $\tau_1$ requires one token of type $a$ as input to proceed, but it also outputs one token of this type, so the total number of such tokens is unchanged. Similarly, the transition $\tau_2$ requires no tokens of type $a$ as input to proceed, and it also outputs no tokens of this type, so the total number of such tokens is unchanged. 

In Theorem \ref{thm:petri_network_model} we prove that given any Petri net $P$, and any subset $C$ of the catalysts of $P$, there is a network model $G \maps \S(C) \to \Cat$. An object $x \in \S(C)$ says how many tokens of each catalyst are present; $G(x)$ is then the subcategory of $FP$ where the objects are markings that have this specified amount of each catalyst, and morphisms are processes going between these. 

From the functor $G \maps \S(C) \to \Cat$ we can construct a category $\int G$ by `gluing together' all the categories $G(x)$ using the Grothendieck construction. Because $G$ is symmetric monoidal we can use an enhanced version of this construction to make $\int G$ into a symmetric monoidal category \cite{MonGroth}. The tensor product in $\int G$ describes doing processes `in parallel'. The category $\int G$ is similar to $FP$, but it is better suited to applications where agents each have their own `individuality', because $FP$ is actually a \emph{commutative} monoidal category, where permuting agents has no effect at all, while $\int G$ is not so degenerate. In Theorem \ref{thm:grothendieck} we make this precise by more concretely describing $\int G$ as a symmetric monoidal category, and clarifying its relation to $FP$.

There are no morphisms between an object of $G(x)$ and an object of $G(x')$ unless $x \cong x'$, since no transitions can change the amount of catalysts present. The category $FP$ is thus a `disjoint union', or more precisely a coproduct, of subcategories $FP_i$ where $i$, an element of free commutative monoid on $C$, specifies the amount of each catalyst present. The tensor product on $FP$ has the property that tensoring an object in $FP_i$ with one in $FP_j$ gives an object in $FP_{i+j}$, and similarly for morphisms. 

However, in Prop.\ \ref{prop:monoidal} we show that each subcategory $FP_i$ also has its own tensor product, which describes doing one process and then another while reusing catalyst tokens.  This tensor product makes $FP_i$ into a `premonoidal category'---an interesting generalization of a monoidal category which we recall.  Finally, in Theorem \ref{thm:lift} we show that these monoidal structures define a lift of the functor $G \maps \S(C) \to \Cat$ to a functor $\hat{G} \maps \S(C) \to \PreMonCat$, where $\PreMonCat$ is the category of strict premonoidal categories.

\section{Petri Nets}

A Petri net generates a symmetric monoidal category whose objects are tensor products of species and whose morphisms are built from the transitions by repeatedly taking composites and tensor products. There is a long line of work on this topic starting with the papers of Meseguer--Montanari \cite{MM} and Engberg--Winskel \cite{EW}, both dating to roughly 1990. It continues to this day, because the issues involved are surprisingly subtle \cite{DMM, Master,  SassoneStrong, SassoneCategory, SassoneAxiomatization, Congruence}. In particular, there are various kinds of symmetric monoidal categories to choose from. Following our work with Master \cite{OpenPetriNets} we use `commutative' monoidal categories. These are just commutative monoid objects in $\Cat$, so their associator: 
\[  
    \alpha_{a, b, c} \colon (a \otimes b) \otimes c \stackrel{\sim}{\longrightarrow} a \otimes (b \otimes c), 
\]
their left and right unitor:
\[  
    \lambda_a \maps I \otimes a \stackrel{\sim}{\longrightarrow} a , \qquad
    \rho_a \maps a \otimes I \stackrel{\sim}{\longrightarrow} a , 
\]
and even their braiding:
\[   
    \sigma_{a, b} \maps a \otimes b \stackrel{\sim}{\longrightarrow} b \otimes a 
\]
are all identity morphisms. While every symmetric monoidal category is equivalent to one with trivial associator and unitors, this ceases to be true if we also require the braiding to be trivial. However, it seems that  Petri nets most naturally serve to present symmetric monoidal categories of this very strict sort. Thus, we shall describe a functor from the category of Petri nets to the category of commutative monoidal categories, which we call $\CMC$:
\[ 
    F \colon \Petri \to \CMC .
\]

To begin, let $\CMon$ be the category of commutative monoids and monoid homomorphisms.   There is a forgetful functor from $\CMon$ to $\Set$ that sends commutative monoids to their underlying sets and monoid homomorphisms to their underlying functions. It has a left adjoint $\N \maps \Set \to \CMon$ sending any set $X$ to the free commutative monoid on $X$.  An element $a \in \N[X]$ is formal linear combination of elements of $X$:
\[    a = \sum_{x \in X} a_x \, x ,\]
where the coefficients $a_x$ are natural numbers and all but finitely many are zero.  The set $X$ naturally includes in $\N[X]$, and for any function $f \maps X \to Y$, $\N[f] \maps \N[X] \to \N[Y]$ is the unique monoid homomorphism that extends $f$. We often abuse language and use $\N[X]$ to mean the underlying set of the free commutative monoid on $X$. 

\begin{defn} 
    A \define{Petri net} is a pair of functions of the following form:
    \[\begin{tikzcd}
        T
        \arrow[r, shift left = 1, "s"]
        \arrow[r, shift right = 1, "t", swap]
        &
        \N[S].
    \end{tikzcd}\]
    We call $T$ the set of \define{transitions}, $S$ the set of \define{places} or \define{species}, $s$ the \define{source} function, and $t$ the \define{target} function. We call an element of $\N[S]$ a \define{marking} of the Petri net.
\end{defn}
For example, in this Petri net:
\[
\scalebox{0.8}{
\begin{tikzpicture}
	\begin{pgfonlayer}{nodelayer}
        \node [style=species] (C) at (-1, 0) {$\quad\;$};
        \node [style=species] (B) at (-4, -0.5) {$\quad\;$};
        \node [style=species] (A) at (-4, 0.5) {$\quad\;$};
        \node [style=transition] (tau1) at (-2.5, 0.6) {$\big.\;\;\;\, $};
        \node [style=transition] (tau2) at (-2.5, -0.7) {$\big.\;\;\;\, $};
        \node [style = none] () at (-2.5, 0.6) {$\tau_1$};
        \node [style = none] () at (-2.5, -0.69) {$\tau_2$};
        \node [style = none] () at (-4, 0.5) {$a$};
        \node [style = none] () at (-4, -0.5) {$b$};
        \node [style = none] () at (-1, 0) {$c$};
	\end{pgfonlayer}
	\begin{pgfonlayer}{edgelayer}
        \draw [style=inarrow] (A) to (tau1);
        \draw [style=inarrow] (B) to (tau1);
        \draw [style=inarrow, bend left=15, looseness=1.00] (tau1) to (C);
        \draw [style=inarrow, bend left=15, looseness=1.00] (C) to (tau2);
        \draw [style=inarrow, bend left=15, looseness=1.00] (tau2) to (B); 
        \draw [style=inarrow, bend right =15, looseness=1.00] (tau2) to (B); 
	\end{pgfonlayer}
\end{tikzpicture}
}
\]
we have $S = \{a,b,c\}$, $T = \{\tau_1, \tau_2\}$, and 
\[
\begin{array}{ll} s(\tau_1) = a+b & t(\tau_1) = c \\
  s(\tau_2) = c & t(\tau_2) = 2b.
\end{array}
\]
The term `species' is used in applications of Petri nets to chemistry. Since the concept of `catalyst' also arose in chemistry, we henceforth use the term `species' rather than `places'. 

\begin{defn} 
    A \define{Petri net morphism} from the Petri net $P$ to the Petri net $P'$ is a pair of functions ($f \maps T \to T'$, $g \maps S \to S'$) such that the following diagrams commute:
    \[
    \begin{tikzcd}
        T
        \arrow[r, "s"]
        \arrow[d, "f", swap]
        &
        \N[S]
        \arrow[d, "\N\lbrack g \rbrack"]
        \\
        T'
        \arrow[r, swap, "s'"]
        &
        \N[S']
    \end{tikzcd}
    \quad
    \begin{tikzcd}
        T
        \arrow[r, "t"]
        \arrow[d, "f", swap]
        &
        \N[S]
        \arrow[d, "\N\lbrack g \rbrack"]
        \\
        T'
        \arrow[r, swap, "t'"]
        &
        \N[S']
    \end{tikzcd}
    \]
    Let $\Petri$ denote the category of Petri nets and Petri net morphisms with composition defined by \[(f, g) \circ (f', g') = (f \circ f', g \circ g').\]
\end{defn}

\begin{defn} 
  A \define{commutative monoidal category} is a commutative monoid object in $(\Cat, \times)$. Let $\CMC$ denote the category of commutative monoid objects in $(\Cat,\times)$.
\end{defn}

More concretely, a commutative monoidal category is a strict monoidal category for which $a \otimes b = b \otimes a$ for all pairs of objects and all pairs of morphisms, and the braid isomorphism $a \otimes b \to b \otimes a$ is the identity map.

Every Petri net $P = \left( s, t \maps T \to \N[S] \right)$ gives rise to a commutative monoidal category $FP$ as follows. We take the commutative monoid of objects $\ob(FP)$ to be the free commutative monoid on $S$. We construct the commutative monoid of morphisms $\mor(FP)$ as follows. First we generate morphisms recursively:
\begin{itemize}
    \item for every transition $\tau \in T$ we include a morphism $\tau \maps s(\tau) \to t(\tau)$;
    \item for any object $a$ we include a morphism $1_a \maps a \to a$;
    \item for any morphisms $f \maps a \to b$ and $g \maps a' \to b'$ we include a morphism denoted $f+g \maps a +a' \to b +b'$ to serve as their tensor product;
    \item for any morphisms $f \maps a \to b$ and $g \maps b \to c$ we include a morphism $g\circ f \maps a \to c$ to serve as their composite.
\end{itemize}
Then we quotient by an equivalence relation on morphisms that imposes the laws of a commutative monoidal category, obtaining the commutative monoid $\mor(FP)$.	

Similarly, morphisms between Petri nets give morphisms between their commutative monoidal categories. Given a Petri net morphism 
\[
\begin{tikzcd}
    T
    \arrow[r, shift left = 1]
    \arrow[r, shift right = 1]
    \arrow[d, "f", swap]
    &
    \N[S]
    \arrow[d, "\N\lbrack g\rbrack"]
    \\
    T'
    \arrow[r, shift left = 1]
    \arrow[r, shift right = 1]
    &
    \N[S']
\end{tikzcd}
\]
we define the functor $F(f, g) \maps FP \to FP'$ to be $\N[g]$ on objects, and on morphisms to be the unique map extending $f$ that preserves identities, composition, and the tensor product. This functor is strict symmetric monoidal. 

\begin{prop}
    There is a functor $F \maps \Petri \to \CMC$ defined as above.
\end{prop}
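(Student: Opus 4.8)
The plan is to check, in order, that the assignments described above are well defined and satisfy the functor axioms. There are four things to verify: (i) for each Petri net $P$, the structure $FP$ really is an object of $\CMC$, i.e.\ a commutative monoid object in $(\Cat,\times)$; (ii) for each Petri net morphism $(f,g)\maps P\to P'$, the assignment $F(f,g)$ is a well-defined morphism in $\CMC$; (iii) $F$ preserves identities; and (iv) $F$ preserves composition. Steps (i), (iii) and (iv) are largely bookkeeping once (ii) is in hand, so the real content is step (ii), and within it the well-definedness of $F(f,g)$ on the \emph{equivalence classes} of morphisms of $FP$ --- this is the step I expect to be the main obstacle.

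For (i), I would note that $\ob(FP)=\N[S]$ is a commutative monoid by definition, that the recursive clauses generating $\mor(FP)$ produce a set of morphisms (using that $S$ and $T$ are sets) closed under $+$ and $\circ$ with source and target preserved by these operations, and that the quotient by the imposed relations is designed precisely so that $(\mor(FP),+)$ is a commutative monoid, $\circ$ is associative and unital, $+$ is functorial with respect to $\circ$ and $1_{a+b}=1_a+1_b$, and the source and target maps are monoid homomorphisms; hence $FP$ is a commutative monoid object in $\Cat$. One could alternatively package this as a universal property of $FP$, but the direct check suffices here.

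For (ii), the key move is to define $F(f,g)$ first on the \emph{generating} morphisms of $FP$, before passing to the quotient: a transition $\tau$ is sent to $f(\tau)$, an identity $1_a$ to $1_{\N[g](a)}$, a formal sum $h+h'$ to $F(f,g)(h)+F(f,g)(h')$, and a composite $h'\circ h$ to $F(f,g)(h')\circ F(f,g)(h)$. That this respects sources and targets uses the two commuting squares in the definition of a Petri net morphism, which say exactly $\N[g]\circ s=s'\circ f$ and $\N[g]\circ t=t'\circ f$, together with the fact that $\N[g]$ is a monoid homomorphism so it commutes with the formal sums appearing in the recursion. The remaining, crucial point is that this assignment descends to the quotient: one must check that each relation imposed on $\mor(FP)$ --- the associativity and unit laws for $\circ$, the interchange law, commutativity and associativity of $+$, functoriality of $+$, and $1_{a+b}=1_a+1_b$ --- is sent to an identity in $FP'$. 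This holds because every such relation is one of the defining axioms of a commutative monoidal category and $FP'$ is one; so the map on the free structure factors through the congruence. By construction the resulting functor $F(f,g)$ is the monoid homomorphism $\N[g]$ on objects and preserves $1$, $\circ$ and $+$ on morphisms, and it preserves the (identity) braiding automatically; hence it is a morphism of commutative monoid objects in $\Cat$.

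For (iii) and (iv), I would observe that $F(1_T,1_S)$ is the identity on $\N[S]$ and fixes every generating morphism of $FP$, hence equals $1_{FP}$. For composition, $F\big((f,g)\circ(f',g')\big)=F(f\circ f',\,g\circ g')$ acts on objects by $\N[g\circ g']=\N[g]\circ\N[g']$ (functoriality of the free-commutative-monoid functor $\N$) and sends a transition $\tau$ to $f(f'(\tau))$; since both $F\big((f,g)\circ(f',g')\big)$ and $F(f,g)\circ F(f',g')$ are morphisms in $\CMC$ that agree on objects and on all generating morphisms of $FP$, they agree. This completes the verification that $F\maps\Petri\to\CMC$ is a functor.
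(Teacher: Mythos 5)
Your proof is correct and is exactly the ``straightforward'' verification that the paper declines to spell out (its proof consists of the single remark that the claim is straightforward, deferring only the harder adjointness statement to Master's paper). You correctly identify the one point with any content --- that $F(f,g)$, defined on generators, descends to the quotient because the imposed relations are instances of the commutative monoidal category axioms, which hold in $FP'$ --- so nothing is missing.
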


\begin{proof}
This is straightforward; the proof that $F$ is a left adjoint is harder \cite{Master}, but we do not need this here.
\end{proof}

\section{Catalysts}

One thinks of a transition $\tau$ of a Petri net as a process that consumes the source species $s(\tau)$ and produces the target species $t(\tau)$. An example of something that can be represented by a Petri net is a chemical reaction network \cite{BaezBiamonte, BaezPollard}. Indeed, this is why Carl Petri originally invented them. A `catalyst' in a chemical reaction is a species that is necessary for the reaction to occur, or helps lower the activation energy for reaction, but is neither increased nor depleted by the reaction. We use a modest generalization of this notion, defining a \emph{catalyst} in a Petri net to be a species that is neither increased nor depleted by \emph{any} transition in the Petri net. 

Given a Petri net $s, t \maps T \to \N[S]$, recall that for any marking $a \in \N[S]$ we have
\[    a = \sum_{x \in S} a_x x \]
for certain coefficients $a_x \in \N$. Thus, for any transition $\tau$ of a Petri net, $s(\tau)_x$ is the coefficient of the place $x$ in the source of $\tau$, while $t(\tau)_x$ is its coefficient in the target of $\tau$.

\begin{defn}
    A species $x \in S$ in a Petri net $P = (s, t \maps T \to \N[S])$ is called a \define{catalyst} if $s(\tau)_x = t(\tau)_x$ for every transition $\tau \in T$. Let $S_{\mathrm{cat}} \subseteq S$ denote the set of catalysts in $P$.
\end{defn}

\begin{defn} 
    A \define{Petri net with catalysts} is a Petri net $P = (s, t \maps T \to \N[S])$ with a chosen subset $C \subseteq S_{\mathrm{cat}}$. We denote a Petri net $P$ with catalysts $C$ as $(P, C)$.
\end{defn}

Suppose we have a Petri net with catalysts $(P, C)$. Recall that the set of objects of $FP$ is the free commutative monoid $\N[C]$. We have a natural isomorphism 
\[
    \N[S] \cong \N[C] \times \N[S \setminus C]. 
\]
We write
\[
    \pi_C \maps \N[S] \to \N[C] 
\]
for the projection. Given any object $a \in FP$, $\pi_C(a)$ says how many catalysts of each species in $C$ occur in $a$.

\begin{defn} 
    Given a Petri net with catalysts $(P, C)$ and any $i \in \N[C]$, let $FP_i$ be the full subcategory of $FP$ whose objects are objects $a \in FP$ with $\pi_C (a) = i$.
\end{defn}

Morphisms in $FP_i$ describe processes that the Petri net can carry out with a specific fixed amount of every catalyst. Since no transition in $P$ creates or destroys any catalyst, if $f \maps a \to b$ is a morphism in $FP$ then 
\[
    \pi_C(a) = \pi_C(b) .
\]
Thus, $FP$ is the coproduct of all the 
subcategories $FP_i$:
\[
    FP \cong \coprod_{i \in \N[C]} FP_i
\]
as categories. The subcategories $FP_i$ are not generally monoidal subcategories because if $a, b \in FP$ and $a+b$ is their tensor product then 
\[
    \pi_C(a+b) = \pi_C(a) + \pi_C(b) 
\]
so for any $i, j \in \N[C]$ we have
\[
    a \in FP_i, \; b \in FP_j \Rightarrow a + b \in FP_{i + j}
\]
and similarly for morphisms.
Thus, we can think of $FP$ as a commutative monoidal category `graded' by $\N[C]$. But note we are free to reinterpret any process as using a \emph{greater} amount of various catalysts, by tensoring it with identity morphism on this \emph{additional} amount of 
catalysts. That is, given any morphism in $FP_i$, we can always tensor it with the identity on $j$ to get a morphism in $FP_{i+j}$.

Since $\N[C]$ is a commutative monoid we can think of it as a commutative monoidal category with only identity morphisms, and we freely do this in what follows. Network models rely on a similar but less trivial way of constructing a symmetric monoidal category from a set $C$. Namely, for any set $C$ there is a category $\S(C)$ for which:
\begin{itemize}
    \item Objects are formal expressions of the form
    \[
           c_1 \otimes \cdots \otimes c_n 
    \]
    for $n \in \N$ and $c_1, \dots, c_n \in C$. When $n = 0$ we write this expression as $I$.
    \item There exist morphisms
    \[
           f \maps c_1 \otimes \cdots \otimes c_m \to c'_1 \otimes \cdots \otimes c'_n 
    \]
    only if $m = n$, and in that case a morphism is a permutation $\sigma \in S_n$ such that $c'_{\sigma(i)} = c_i$ for all $i=1, \dots , n$.
    \item Composition is the usual composition of permutations. 
\end{itemize}
In short, an object of $\S(C)$ is a list of catalysts, possibly empty, and allowing repetitions. A morphism is a permutation that maps one list to another list. 

As shown in \cite[Prop.\ 17]{NetworkModels}, $\S(C)$ is the free strict symmetric monoidal category on the set $C$. There is thus a strict symmetric monoidal functor
\[
    p \maps \S(C) \to \N[C] 
\]
sending each object $c_1 \otimes \cdots \otimes c_n$ to the object $c_1 + \cdots + c_n$, and sending every morphism to an identity morphism. This can also be seen directly. In what follows, we use this functor $p$ to construct a lax symmetric monoidal functor $G \maps \S(C) \to \Cat$, where $\Cat$ is made symmetric monoidal using its cartesian product.

\begin{prop} 
    Given a Petri net with catalysts $(P, C)$, there exists a unique functor $G \maps \S(C) \to \Cat$ sending each object $x \in \S(C)$ to the category $FP_{p(x)}$ and each morphism in $\S(C)$ to an identity functor.
\end{prop}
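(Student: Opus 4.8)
The plan is to build $G$ as a composite through the functor $p \maps \S(C) \to \N[C]$ constructed just above. The key observation is that $\N[C]$, viewed as a commutative monoidal category, is discrete: its only morphisms are identities. Hence a functor out of $\N[C]$ is the same thing as a function on objects, so the assignment $i \mapsto FP_i$ determines a unique functor $H \maps \N[C] \to \Cat$, and $H$ automatically sends every (identity) morphism to an identity functor. I would then define $G := H \circ p$.

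It remains to check that $G$ has the advertised behaviour and that it is the only such functor. On objects, $G(x) = H(p(x)) = FP_{p(x)}$ by construction, since $p$ sends $c_1 \otimes \cdots \otimes c_n$ to $c_1 + \cdots + c_n$. On a morphism $\sigma \maps x \to y$ of $\S(C)$, functoriality of $p$ gives both $p(x) = p(y)$ and $p(\sigma) = 1_{p(x)}$; concretely, if $x = c_1 \otimes \cdots \otimes c_n$ and $y = c'_1 \otimes \cdots \otimes c'_n$ with $c'_{\sigma(i)} = c_i$, then $\sum_i c_i = \sum_i c'_{\sigma(i)} = \sum_j c'_j$, so $p(x) = p(y)$ and in particular $G(x) = G(y)$. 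Therefore $G(\sigma) = H(1_{p(x)}) = 1_{FP_{p(x)}}$, an identity functor from $G(x)=G(y)$ to itself, as required, and functoriality of $G$ is automatic since it is a composite of functors.

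For uniqueness: specifying that a functor sends each $x$ to $FP_{p(x)}$ and sends every morphism to an identity functor determines its value on every object and every morphism of $\S(C)$, so there is at most one such functor, and the construction exhibits one. I do not anticipate a genuine obstacle anywhere; the one point worth spelling out is the well-definedness check that makes ``send every morphism to an identity functor'' type-check, namely that the source and target of any morphism of $\S(C)$ have equal image under $p$ — and this is precisely the (already established) functoriality of $p$, equivalently the elementary fact that a permutation leaves the multiset, and hence the sum, of the $c_i$ unchanged.
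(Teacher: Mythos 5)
Your proof is correct and follows essentially the same route as the paper: both define $H \maps \N[C] \to \Cat$ on the discrete category $\N[C]$ by $i \mapsto FP_i$ and set $G = H \circ p$, with uniqueness immediate from the prescription on objects and morphisms. Your version just spells out the well-definedness check (that $p$ identifies source and target of every permutation) which the paper leaves implicit.
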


\begin{proof} 
    The uniqueness is clear. For existence, note that since $\N[C]$ has only identity morphisms there is a functor $H \maps \N[C] \to \Cat$ sending each object $x \in \N[C]$ to the category $FP_{p(x)}$. If we compose $H$ with the functor $p \maps \S(C) \to \N[C]$ described above we obtain the functor $G$. 
\end{proof}

\begin{thm} 
\label{thm:petri_network_model}
    The functor $G \maps \S(C) \to \Cat$ becomes lax symmetric monoidal with the lax structure map
    \[
        \Phi_{x, y} \maps FP_{p(x)} \times FP_{p(y)} \to FP_{p(x \otimes y)}
    \]
    given by the tensor product in $FP$, and the map
    \[
        \phi \maps 1 \to FP_0 
    \]
    sending the unique object of the terminal category $1 \in \Cat$ to the unit for the tensor product in $FP$, which is the object $0 \in FP_0$.
\end{thm}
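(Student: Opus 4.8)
The plan is to exploit the factorization $G = H \circ p$ from the previous proposition, where $p \maps \S(C) \to \N[C]$ is the strict symmetric monoidal functor described above and $H \maps \N[C] \to \Cat$ sends $i$ to $FP_i$ and every (identity) morphism to an identity functor. Since a lax symmetric monoidal functor composed after a strict symmetric monoidal functor is again lax symmetric monoidal — and, because $p$ is strict, the structure maps of the composite are just those of $H$ reindexed along $p$ — it suffices to equip $H$ with a lax symmetric monoidal structure whose components are $\Phi_{i,j} \maps FP_i \times FP_j \to FP_{i+j}$, given by the tensor product $+$ of $FP$, together with $\phi \maps 1 \to FP_0$ picking out the object $0 \in FP$. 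Pulling these back along $p$ then yields exactly the maps in the statement, since $p(x \otimes y) = p(x) + p(y)$, so $FP_{p(x)+p(y)} = FP_{p(x\otimes y)}$.

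First I would check that these data are well defined. The text already records that $\pi_C(a + b) = \pi_C(a) + \pi_C(b)$, and that the same holds on morphisms, so the tensor product functor of $FP$ restricts to a functor $FP_i \times FP_j \to FP_{i+j}$; likewise $\pi_C(0) = 0$, so $0$ is genuinely an object of $FP_0$ and $\phi$ is a well-defined functor out of the terminal category. Naturality of $\Phi$ in both arguments is then automatic: $\N[C]$, regarded as a symmetric monoidal category, has only identity morphisms, so there are no nontrivial naturality squares; equivalently, in $\S(C)$ a morphism $x \to x'$ exists only when $p(x) = p(x')$, and $G$ sends every such morphism to an identity functor, so both legs of the relevant square collapse to $\Phi_{p(x),p(y)}$.

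It remains to verify the coherence axioms for $H$. The associativity axiom asks that, up to the associator of $(\Cat,\times)$, $\Phi_{i+j,k} \circ (\Phi_{i,j} \times \mathrm{id}) = \Phi_{i,j+k} \circ (\mathrm{id} \times \Phi_{j,k})$; since $FP$ is a \emph{commutative monoidal} category its tensor product is strictly associative on objects and morphisms, so $(a+b)+c = a+(b+c)$ on the nose and both composites send $(a,b,c)$ to $a+b+c$. The two unit axioms reduce similarly to the strict unit laws $0 + a = a = a + 0$ in $FP$. Finally, the symmetry axiom requires $\Phi_{j,i} \circ \mathrm{swap} = H(\sigma_{i,j}) \circ \Phi_{i,j}$; here $i + j = j + i$ and the braiding $\sigma_{i,j}$ of the commutative monoid $\N[C]$ is an identity, so $H(\sigma_{i,j}) = 1_{FP_{i+j}}$, and the axiom becomes $b + a = a + b$ for $(a,b)\in FP_i\times FP_j$ — which holds precisely because the braiding of $FP$ is the identity map.

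The verification is thus essentially routine; the only conceptual point is recognizing the factorization through $\N[C]$, which reduces everything to the strict commutative-monoid axioms of $\otimes$ in $FP$. I expect the one place deserving care is the symmetry axiom: it is exactly here that the hypothesis that $FP$ is commutative monoidal, rather than merely symmetric monoidal, is used, and it is why $G$ can send all morphisms — and all the reindexing braidings — to identities. The only other mild subtlety in writing this out is bookkeeping the non-strict associator and unitors of $(\Cat,\times)$ against the strictness on the $FP$ side, which is harmless.
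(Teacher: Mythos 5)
Your proof follows the paper's argument exactly: both factor $G$ as $H \circ p$, equip $H$ with the lax structure given by the tensor product of $FP$ and the unit $0 \in FP_0$, and conclude by composing the lax symmetric monoidal $H$ with the strict symmetric monoidal $p$. The only difference is that you spell out the coherence checks (well-definedness, naturality, associativity, unit, and symmetry axioms) that the paper leaves to the reader as ``one can check,'' and your verifications are correct.
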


\begin{proof} 
    Recall that $G$ is the composite of $p \maps \S(C) \to \N[C]$ and $H \maps \N[C] \to \Cat$. The functor $p$ is strict symmetric monoidal. The functor $p$ is strict symmetric monoidal. One can check that the functor $H$ becomes lax symmetric monoidal if we equip it with the lax structure map
    \[
        FP_i \times FP_j \to FP_{i+j} 
    \]
    given by the tensor product in $FP$, and the map 
    \[
        1 \to FP_0 
    \]
    sending the unique object of $1 \in \Cat$ to the unit for the tensor product in $FP$, namely $0 \in \N[S] = \ob(FP)$. Composing the lax symmetric monoidal functor $H$ and with the strict symmetric monoidal functor $p$, we obtain the lax symmetric monoidal functor $G$ described in the theorem statement. 
\end{proof}

In our previous paper \cite{NetworkModels}, a \define{$C$-colored network model} was defined to be a lax symmetric monoidal functor from $\S(C)$ to $\Cat$. 

\begin{defn}
    We call the $C$-colored network model $G \maps \S(C) \to \Cat$ of Theorem \ref{thm:petri_network_model} the \define{Petri network model} associated to the Petri net with catalysts $(P, C)$.
\end{defn}

\begin{example}
\label{ex:1}
    The following Petri net $P$ has species $S = \{a, b, c, d, e\}$ and transitions $T = \{\tau_1, \tau_2\}$:
    \[
    \scalebox{0.8}{
    \begin{tikzpicture}
    	\begin{pgfonlayer}{nodelayer}
    	    \node [style=catalyst] (A) at (-2, 2) {$\;\;a\;\;$};
    	    \node [style=catalyst] (B) at (2, 2) {$\;\;b\;\;$};
    		\node [style=species] (C) at (-4, 0.6) {$\;\;c\;\;$};
    		\node [style=species] (D) at (0, 0.6) {$\;\;d\;\;$};
    		\node [style=species] (E) at (4, 0.6) {$\;\;e\;\;$};
    		\node [style=transition] (tau1) at (-2, 0.6) {$\;\phantom{\Big{|}}\tau_1\;$};
            \node [style=transition] (tau2) at (2, 0.6) {$\;\phantom{\Big{|}}\tau_2\;$};
    	\end{pgfonlayer}
    	\begin{pgfonlayer}{edgelayer}
    		\draw [style=inarrow, bend right=70, looseness=1.00, red] (A) to (tau1);
    		\draw [style=inarrow, bend right=70, looseness=1.00, red] (tau1) to (A);
    		\draw [style=inarrow, bend right=70, looseness=1.00, red] (B) to (tau2);
    		\draw [style=inarrow, bend right=70, looseness=1.00, red] (tau2) to (B);
    		\draw [style=inarrow, bend left=12, looseness=1] (C) to (tau1);
    		\draw [style=inarrow, bend right=12, looseness=1] (C) to (tau1);
    	    \draw [style=inarrow, bend left=12, looseness=1] (tau1) to (D);
    	    \draw [style=inarrow, bend right=12, looseness=1] (tau1) to (D);
    	    \draw [style=inarrow] (D) to (tau2);
    	    \draw [style=inarrow] (tau2) to (E);
    	\end{pgfonlayer}
    \end{tikzpicture}
    }
    \]
    Species $a$ and $b$ are catalysts, and the rest are not. We thus can take $C = \{a, b\}$ and obtain a Petri net with catalysts $(P, C)$, which in turn gives a Petri network model $G \maps \S(C) \to \Cat$.  We outline catalyst species in red, and also draw the edges connecting them to transitions in red.
    
    Here is one possible interpretation of this Petri net. Tokens in $\, c\, $ represent people at a base on land, tokens in $\, d\, $ are people at the shore, and tokens in $\, e\, $ are people on a nearby island. Tokens in $\, a\, $ represent jeeps, each of which can carry two people at a time from the base to the shore and then return to the base. Tokens in $\, b\, $ represent boats that carry one person at a time from the shore to the island and then return. 
    
    Let us examine the effect of the functor $G \maps \S(C) \to \Cat$ on various objects of $\S(C)$. The object $a \in \S(C)$ describes a situation where there is one jeep present but no boats. The category $G(a)$ is isomorphic to $FX$, where $X$ is this Petri net:
    \[
    \scalebox{0.8}{
    \begin{tikzpicture}
    	\begin{pgfonlayer}{nodelayer}
    		\node [style=species] (C) at (-4, 0.6) {$\;\;c\;\;$};
    		\node [style=species] (D) at (0, 0.6) {$\;\;d\;\;$};
    		\node [style=species] (E) at (4, 0.6) {$\;\;e\;\;$};
    		\node [style=transition] (tau1) at (-2, 0.6) {$\;\phantom{\Big{|}}\tau_1\;$};
    	\end{pgfonlayer}
    	\begin{pgfonlayer}{edgelayer}
    		\draw [style=inarrow, bend left=12, looseness=1] (C) to (tau1);
    		\draw [style=inarrow, bend right=12, looseness=1] (C) to (tau1);
    	    \draw [style=inarrow, bend left=12, looseness=1] (tau1) to (D);
    	    \draw [style=inarrow, bend right=12, looseness=1] (tau1) to (D);
    	\end{pgfonlayer}
    \end{tikzpicture}
    }\]
    That is, people can go from the base to the shore in pairs, but they cannot go to the island. Similarly, the object $b$ describes a situation with one boat present but no jeeps, and the category $G(b)$ is isomorphic to $FY$, where $Y$ is this Petri net:
    \[
    \scalebox{0.8}{
    \begin{tikzpicture}
    	\begin{pgfonlayer}{nodelayer}
    		\node [style=species] (C) at (-4, 0.6) {$\;\;c\;\;$};
    		\node [style=species] (D) at (0, 0.6) {$\;\;d\;\;$};
    		\node [style=species] (E) at (4, 0.6) {$\;\;e\;\;$};
            \node [style=transition] (tau2) at (2, 0.6) {$\;\phantom{\Big{|}}\tau_2\;$};
    	\end{pgfonlayer}
    	\begin{pgfonlayer}{edgelayer}
    	    \draw [style=inarrow] (D) to (tau2);
    	    \draw [style=inarrow] (tau2) to (E);
    	\end{pgfonlayer}
    \end{tikzpicture}
    }
    \]
    Now people can only go from the shore to the island, one at a time.
        
    The object $a \otimes b \in \S(C)$ describes a situation with one jeep and one boat. The category $G(a \otimes b)$ is isomorphic to $FZ$ for this Petri net $Z$:
    \[
    \scalebox{0.8}{
    \begin{tikzpicture}
    	\begin{pgfonlayer}{nodelayer}
    		\node [style=species] (C) at (-4, 0.6) {$\;\;c\;\;$};
    		\node [style=species] (D) at (0, 0.6) {$\;\;d\;\;$};
    		\node [style=species] (E) at (4, 0.6) {$\;\;e\;\;$};
    		\node [style=transition] (tau1) at (-2, 0.6) {$\;\phantom{\Big{|}}\tau_1\;$};
            \node [style=transition] (tau2) at (2, 0.6) {$\;\phantom{\Big{|}}\tau_2\;$};
    	\end{pgfonlayer}
    	\begin{pgfonlayer}{edgelayer}
    		\draw [style=inarrow, bend left=12, looseness=1] (C) to (tau1);
    		\draw [style=inarrow, bend right=12, looseness=1] (C) to (tau1);
    	    \draw [style=inarrow, bend left=12, looseness=1] (tau1) to (D);
    	    \draw [style=inarrow, bend right=12, looseness=1] (tau1) to (D);
    	    \draw [style=inarrow] (D) to (tau2);
    	    \draw [style=inarrow] (tau2) to (E);
    	\end{pgfonlayer}
    \end{tikzpicture}
    }
    \]
    Now people can go from the base to the shore in pairs and also go from the shore to the island one at a time.
    
    Surprisingly, an object $x \in \S(C)$ with additional jeeps and/or boats always produces a category $G(x)$ that is isomorphic to one of the three just shown: $G(a), G(b)$ and $G(a \otimes b)$. For example, consider the object $b \otimes b \in \S(C)$, where there are two boats present but no jeeps. There is an isomorphism of categories
    \[  
         - + b \maps G(b)  \to  G(b \otimes b) 
    \]
    defined as follows. Recall that $G(b) = FP_b$ and $G(b \otimes b) = FP_{b+b}$, where $FP_b$ and $FP_{b+b}$ are subcategories of $FP$. The functor
    \[
        - + b \maps FP_b \to FP_{b+b}
    \]
    sends each object $x \in FP_b$ to the object $ x + b$, and sends each morphism $f \maps x \to y$ in $FP_b$ to the morphism $1_b + f \maps b + x \to b + y$. That this defines a functor is clear; the surprising part is that it is an isomorphism. One might have thought that the presence of a second boat would enable one to carry out a given task in more different ways.
        
    Indeed, while this is true in real life, the category $FP$ is \emph{commutative} monoidal, so tokens of the same species have no `individuality': permuting them has no effect. There is thus, for example, no difference between the following two morphisms in $FP_{b+b}$:
    \begin{itemize}
        \item using one boat to transport one person from the base to shore and another boat to transport another person, and
        \item using one boat to transport first one person and then another.
    \end{itemize} 
    
It is useful to draw morphisms in $FP$ as string diagrams, since such diagrams serve as a general notation for morphisms in monoidal categories \cite{JoyalStreet}.   For expository treatments, see \cite{BaezStay,Selinger}.   The rough idea is that objects of a monoidal category are drawn as labelled wires, and a morphism $f \maps x_1 \otimes \cdots \otimes x_m \to y_1 \otimes \cdots \otimes y_n$ is drawn as a box with $m$ wires coming in on top and $n$ wires coming out at the bottom.  Composites of morphisms are drawn by attaching output wires of one morphism to input wires of another, while tensor products of morphisms are drawn by setting pictures side by side.  In symmetric monoidal categories, the braiding is drawn as a crossing of wires.   The rules governing string diagrams let us manipulate them while not changing the morphisms they denote.  In the case of symmetric monoidal categories, these rules are well known \cite{JoyalStreet,Selinger}.   For \emph{commutative} monoidal categories there is one additional rule:
    \[
    \scalebox{0.8}{
    \begin{tikzpicture}
    	\begin{pgfonlayer}{nodelayer}
    		\node [style=empty] (a) at (0, 1) {$x$};
    		\node [style=empty] (b) at (1, 1) {$y$};
    		\node [style=empty] (equals) at (2, 0) {$=$};
            \node [style=empty] (c) at (0, -1) {$y$};
            \node [style=empty] (d) at (1, -1) {$x$};
    	\end{pgfonlayer}
    	\begin{pgfonlayer}{edgelayer}
    		\draw [line width=1.5 pt] (a) to (d);
    		\draw [line width=1.5 pt] (b) to (c);
        	\end{pgfonlayer}
    \end{tikzpicture}
    }
    \scalebox{0.8}{
    \begin{tikzpicture}
    	\begin{pgfonlayer}{nodelayer}
    		\node [style=empty] (a) at (0, 1) {$x$};
    		\node [style=empty] (b) at (1, 1) {$y$};
            \node [style=empty] (c) at (1, -1) {$y$};
            \node [style=empty] (d) at (0, -1) {$x$};
    	\end{pgfonlayer}
    	\begin{pgfonlayer}{edgelayer}
    		\draw [line width=1.5 pt] (a) to (d);
    		\draw [line width=1.5 pt] (b) to (c);
        	\end{pgfonlayer}
    \end{tikzpicture}
    }
    \]
    This says both that $x \otimes y = y \otimes x$ and that the braiding $\sigma_{x,y} \maps x \otimes y \to y \otimes x$ is the identity.
    
    Here is the string diagram notation for the equation we mentioned between two morphisms in $FP$:
    \[
    \scalebox{0.8}{
    \begin{tikzpicture}
    	\begin{pgfonlayer}{nodelayer}
    		\node [style=empty, red] (b) at (0, 4) {$b$};
    		\node [style=empty, red] (b') at (1.5, 4) {$b$};
    		\node [style=empty] (d) at (3, 4) {$d$};
    		\node [style=empty] (d') at (4.5, 4) {$d$};
    		\node [style=morphism] (tau1) at (2.25, 2.2) {$\;\phantom{\Big|}\tau_2\phantom{\Big|}\;$};
    		\node [style=empty] (equals) at (5.5, 1) {$=$};
            \node [style=morphism] (tau2) at (2.25, -0.2) {$\;\phantom{\Big|}\tau_2\phantom{\Big|\;}$};
            \node [style=empty, red] (b'') at (0, -2) {$b$};
            \node [style=empty, red] (b''') at (1.5, -2) {$b$};
            \node [style=empty] (e) at (3, -2) {$e$};
            \node [style=empty] (e') at (4.5, -2) {$e$};
    	\end{pgfonlayer}
    	\begin{pgfonlayer}{edgelayer}
    		\draw [line width=1.5 pt, red] (b) to (tau2);
    		\draw [line width=1.5 pt, red] (b') to (tau1);
    		\draw [line width=1.5 pt] (d) to (tau1);
    		\draw [line width=1.5 pt] (d') to (tau2);
    		\draw [line width=1.5 pt, red] (tau1) to (b'');
    		\draw [line width=1.5 pt] (tau1) to (e');
    		\draw [line width=1.5 pt, red] (tau2) to (b''');
    		\draw [line width=1.5 pt] (tau2) to (e);
        	\end{pgfonlayer}
    \end{tikzpicture}
    }
    \scalebox{0.8}{
    \begin{tikzpicture}
    	\begin{pgfonlayer}{nodelayer}
    		\node [style=empty, red] (b) at (0, 4) {$b$};
    		\node [style=empty, red] (b') at (1.5, 4) {$b$};
    		\node [style=empty] (d) at (3, 4) {$d$};
    		\node [style=empty] (d') at (4.5, 4) {$d$};
    		\node [style=morphism] (tau1) at (2.25, 2.2) {$\;\phantom{\Big|}\tau_2\phantom{\Big|}\;$};
            \node [style=morphism] (tau2) at (2.25, -0.2) {$\;\phantom{\Big|}\tau_2\phantom{\Big|\;}$};
            \node [style=empty, red] (b'') at (0, -2) {$b$};
            \node [style=empty, red] (b''') at (1.5, -2) {$b$};
            \node [style=empty] (e) at (3, -2) {$e$};
            \node [style=empty] (e') at (4.5, -2) {$e$};
    	\end{pgfonlayer}
    	\begin{pgfonlayer}{edgelayer}
    		\draw [line width=1.5 pt, bend left=20, looseness=2, red] (b) to (b'');
    		\draw [line width=1.5 pt, red] (b') to (tau1);
    		\draw [line width=1.5 pt] (d) to (tau1);
    		\draw [line width=1.5 pt] (d') to (tau2);
    		\draw [line width=1.5 pt, bend right =30, looseness=1.5, red] (tau1) to (tau2);
    		\draw [line width=1.5 pt] (tau1) to (e');
    		\draw [line width=1.5 pt, red] (tau2) to (b''');
    		\draw [line width=1.5 pt] (tau2) to (e);
        	\end{pgfonlayer}
        \end{tikzpicture}
        }
    \]
    We draw the object $b$ (standing for a boat) in red to emphasize that it serves as a catalyst.  At left we are first using one boat to transport one person from the base to shore, and then using another boat to transport another person. At right we are using the same boat to transport first one person and then another, while another boat stands by and does nothing.  These morphisms are equal because they differ only by the presence of the braiding $\sigma_{b,b} \maps b + b \to b + b$ in the left hand side, and this is an identity morphism.
\end{example}

The above example illustrates an important point: in the commutative monoidal category $FP$, permuting catalyst tokens has no effect. Next we construct a symmetric monoidal category $\int G$ in which permuting such tokens has a nontrivial effect.  One reason for wanting this is that in applications, the catalyst tokens may represent agents with their own individuality. For example, when directing a boat to transport a person from base to shore, we need to say \emph{which} boat should do this.  For this we need a symmetric monoidal category that gives the catalyst tokens a nontrivial braiding.

To create this category, we use the symmetric monoidal Grothendieck construction \cite{MonGroth}. Given any symmetric monoidal category $X$ and any lax symmetric monoidal functor $F \maps X \to \Cat$, this construction gives a symmetric monoidal category $\int F$ equipped with a functor (indeed an opfibration) $\int F \to X$. In our previous work \cite{NetworkModels} we used this construction to build an operad from any network model, whose operations are ways to assemble larger networks from smaller ones. Now this construction has a new significance.

Starting from a Petri network model $G \maps \S(C) \to \Cat$, the symmetric monoidal Grothendieck construction gives a symmetric monoidal category $\int G$ in which:
\begin{itemize}
    \item an object is a pair $(x, a)$ where $x \in \S(C)$ and $a \in FP_{p(x)}$.
    \item a morphism from $(x, a)$ to $(x', a')$ is a pair $(\sigma, f)$ where $\sigma \maps x \to x'$ is a morphism in $\S(C)$ and $f \maps a \to a'$ is a morphism in $FP$.
    \item morphisms are composed componentwise.
    \item the tensor product is computed componentwise: in particular, the tensor product of objects $(x, a)$ and $(x', a')$ is $(x \otimes x', a + a')$.
    \item the associators, unitors and braiding are also computed componentwise (and hence are trivial in the second component, since $FP$ is a commutative monoidal category).
\end{itemize}
The functor $\int G \to \S(C)$ simply sends each pair to its first component.

This is simpler than one typically expects from the Grothendieck construction. There are two main reasons: first, $G$ maps every morphism in $\S(C)$ to an identity morphism in $\Cat$, and second, the lax structure map for $G$ is simply the tensor product in $FP$. However, this construction still has an important effect: it makes the process of switching two tokens of the same catalyst species into a nontrivial morphism in $\int G$.
More formally, we have:

\begin{thm}
\label{thm:grothendieck}
    If $G \maps \S(C) \to \Cat$ is the Petri network model associated to the Petri net with catalysts $(P, C)$, then $\int G$ is equivalent, as a symmetric monoidal category, to the full subcategory of $\S(C) \times FP$ whose objects are those of the form $(x, a)$ with $x \in \S(C)$ and $a \in FP_{p(x)}$. 
\end{thm}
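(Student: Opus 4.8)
The plan is to take the explicit componentwise description of $\int G$ recorded just before the theorem---which is what the symmetric monoidal Grothendieck construction of \cite{MonGroth} yields for our $G$, precisely because $G$ sends every morphism of $\S(C)$ to an identity functor and its lax structure maps are $\Phi_{x,y} = {+} \maps FP_{p(x)} \times FP_{p(y)} \to FP_{p(x\otimes y)}$ and $\phi$ picking out $0 \in FP_0$---and to match it term by term against the evident structure on the full subcategory $\D \subseteq \S(C) \times FP$ on the objects $(x,a)$ with $a \in FP_{p(x)}$. So the whole proof is an unwinding: identify objects, identify morphisms, identify the monoidal data.

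First I would check that $\D$ really is a symmetric monoidal subcategory of $\S(C) \times FP$. Since $p \maps \S(C) \to \N[C]$ is strict symmetric monoidal we have $p(x \otimes y) = p(x) + p(y)$, and $a \in FP_{p(x)}$, $b \in FP_{p(y)}$ give $a + b \in FP_{p(x)+p(y)} = FP_{p(x\otimes y)}$; also $(I,0) \in \D$ since $p(I) = 0$ and $0 \in FP_0$. Thus $\D$ is closed under $\otimes$ and contains the unit, so it inherits a strict symmetric monoidal structure from the product $\S(C) \times FP$, whose braiding on $(x,a),(y,b)$ is $(\sigma_{x,y}, 1_{a+b})$ because $FP$ is commutative monoidal.

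Then I would define the comparison functor $\Psi \maps \int G \to \D$ to be the identity on objects and on morphisms, and verify it is well defined and bijective. The objects of the two categories are literally the same pairs $(x,a)$. For morphisms: a morphism $(x,a) \to (x',a')$ in $\int G$ is a pair $(\sigma, f)$ with $\sigma \maps x \to x'$ in $\S(C)$ and $f \maps a \to a'$ in $FP$; since a morphism $\sigma$ exists in $\S(C)$ only when $p(x) = p(x')$, both $a$ and $a'$ lie in $FP_{p(x)}$, and because $FP_{p(x)}$ is a \emph{full} subcategory of $FP$ such an $f$ is exactly a morphism of $FP_{p(x)}$ (and no twisting by $G(\sigma)$ occurs, as $G(\sigma)$ is an identity functor); hence the hom-sets of $\int G$ and $\D$ coincide, and composition is componentwise in both. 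Comparing tensor products ($(x,a) \otimes (y,b) = (x\otimes y, a+b)$, $(\sigma,f)\otimes(\tau,g) = (\sigma\otimes\tau, f+g)$), units, associators, unitors and braidings term by term---all second components trivial, all first components coming from $\S(C)$---shows $\Psi$ is a strict symmetric monoidal isomorphism, hence in particular a symmetric monoidal equivalence, which is the assertion.

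I do not expect a genuine obstacle: the content here is entirely bookkeeping, as the authors themselves flag. The one spot warranting care---and the only place where one must look inside the construction of \cite{MonGroth} rather than merely quote its output---is confirming that the braiding of $\int G$ has trivial second component. This holds because that component is assembled from the braiding of $FP$ (an identity) together with the symmetry-compatibility cell of the lax structure map $\Phi = {+}$, which is again an identity since $G(\sigma_{x,y})$ is an identity functor and $a+b = b+a$ in the commutative monoidal category $FP$. So no nontrivial swap is introduced on the $FP$-side even though the $\S(C)$-side swap $\sigma_{x,y}$ is nontrivial---and that is exactly the point of passing from $FP$ to $\int G$.
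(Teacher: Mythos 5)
Your proof is correct and takes essentially the same route as the paper: the paper's entire proof is the one-line remark ``One can read this off from the description of $\int G$ given above,'' and your argument is precisely that reading-off, carried out explicitly (identity-on-objects-and-morphisms comparison, componentwise matching of the monoidal data, and the observation that the braiding's second component is trivial because $FP$ is commutative monoidal). No discrepancy in approach; you have simply supplied the details the authors left implicit.
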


\begin{proof} 
    One can read this off from the description of $\int G$ given above.
\end{proof}

The difference between $\int G$ and $FP$ is that the former category keeps track of processes where catalyst tokens are permuted, while the latter category treats them as identity morphisms.  In the terminology of Glabbeek and Plotkin, $\int G$  implements the `individual token philosophy' on catalysts, in which permuting tokens of the same catalyst is regarded as having a nontrivial effect \cite{GlabbeekPlotkin}.  By contrast, $FP$ implements the `collective token philosophy', where all that matters is the \emph{number} of tokens of each catalyst, and permuting them has no effect.  

There is a map from $\int G$ to $FP$ that forgets the individuality of the catalyst tokens. A morphism in $\int G$ is a pair $(\sigma, f)$ where $\sigma \maps x \to x'$ is a morphism in $\S(C)$ and $f \maps a \to a'$ is a morphism in $FP$ with $a \in G(x), a' \in G(x')$. There is a symmetric monoidal functor
\[
    \textstyle{\int} G \to FP 
\]
that discards this extra information, mapping $(\sigma, f)$ to $f$. The symmetric monoidal Grothendieck construction also gives a symmetric monoidal functor 
\[ \textstyle{\int} G \to \S(C)  \]
and this maps $(\sigma, f)$ to $\sigma$. This functor is an opfibration on general grounds \cite{MonGroth}.

\begin{example}
\label{ex:2}
 Let $(P, C)$ be the Petri net with catalysts in Ex.\ \ref{ex:1}, and $G \maps \S(C) \to \Cat$ the resulting Petri network model. In $\int G$ the following two morphisms are not equal:
    \[
    \scalebox{0.8}{
    \begin{tikzpicture}
    	\begin{pgfonlayer}{nodelayer}
    		\node [style=empty, red] (b) at (0, 4) {$b$};
    		\node [style=empty, red] (b') at (1.5, 4) {$b$};
    		\node [style=empty] (d) at (3, 4) {$d$};
    		\node [style=empty] (d') at (4.5, 4) {$d$};
    		\node [style=morphism] (tau1) at (2.25, 2.2) {$\;\phantom{\Big|}\tau_2\phantom{\Big|}\;$};
    		\node [style=empty] (equals) at (5.5, 1) {$\ne$};
            \node [style=morphism] (tau2) at (2.25, -0.2) {$\;\phantom{\Big|}\tau_2\phantom{\Big|\;}$};
            \node [style=empty, red] (b'') at (0, -2) {$b$};
            \node [style=empty, red] (b''') at (1.5, -2) {$b$};
            \node [style=empty] (e) at (3, -2) {$e$};
            \node [style=empty] (e') at (4.5, -2) {$e$};
    	\end{pgfonlayer}
    	\begin{pgfonlayer}{edgelayer}
    		\draw [line width=1.5 pt, red] (b) to (tau2);
    		\draw [line width=1.5 pt, red] (b') to (tau1);
    		\draw [line width=1.5 pt] (d) to (tau1);
    		\draw [line width=1.5 pt] (d') to (tau2);
    		\draw [line width=1.5 pt, red] (tau1) to (b'');
    		\draw [line width=1.5 pt] (tau1) to (e');
    		\draw [line width=1.5 pt, red] (tau2) to (b''');
    		\draw [line width=1.5 pt] (tau2) to (e);
    	\end{pgfonlayer}
    \end{tikzpicture}
    }
    \scalebox{0.8}{
    \begin{tikzpicture}
    	\begin{pgfonlayer}{nodelayer}
    		\node [style=empty, red] (b) at (0, 4) {$b$};
    		\node [style=empty, red] (b') at (1.5, 4) {$b$};
    		\node [style=empty] (d) at (3, 4) {$d$};
    		\node [style=empty] (d') at (4.5, 4) {$d$};
    		\node [style=morphism] (tau1) at (2.25, 2.2) {$\;\phantom{\Big|}\tau_2\phantom{\Big|}\;$};
            \node [style=morphism] (tau2) at (2.25, -0.2) {$\;\phantom{\Big|}\tau_2\phantom{\Big|\;}$};
            \node [style=empty, red] (b'') at (0, -2) {$b$};
            \node [style=empty, red] (b''') at (1.5, -2) {$b$};
            \node [style=empty] (e) at (3, -2) {$e$};
            \node [style=empty] (e') at (4.5, -2) {$e$};
    	\end{pgfonlayer}
    	\begin{pgfonlayer}{edgelayer}
    		\draw [line width=1.5 pt, bend left=20, looseness=2, red] (b) to (b'');
    		\draw [line width=1.5 pt, red] (b') to (tau1);
    		\draw [line width=1.5 pt] (d) to (tau1);
    		\draw [line width=1.5 pt] (d') to (tau2);
    		\draw [line width=1.5 pt, bend right =30, looseness=1.5, red] (tau1) to (tau2);
    		\draw [line width=1.5 pt] (tau1) to (e');
    		\draw [line width=1.5 pt, red] (tau2) to (b''');
    		\draw [line width=1.5 pt] (tau2) to (e);
    	\end{pgfonlayer}
    \end{tikzpicture}
    }
    \]
because the braiding of catalyst species in $\int G$ is nontrivial.  This says that in $\int G$ we consider these two processes as different:
\begin{itemize}
        \item using one boat to transport one person from the base to shore and another boat to transport another person, and
        \item using one boat to transport first one person and then another.
    \end{itemize} 

On the other hand, in $\int G$ we have
    \[
    \scalebox{0.8}{
    \begin{tikzpicture}
    	\begin{pgfonlayer}{nodelayer}
    		\node [style=empty, red] (b) at (0, 4) {$b$};
    		\node [style=empty, red] (b') at (1.5, 4) {$b$};
    		\node [style=empty] (d) at (3, 4) {$d$};
    		\node [style=empty] (d') at (4.5, 4) {$d$};
    		\node [style=morphism] (tau1) at (2.25, 2.2) {$\;\phantom{\Big|}\tau_2\phantom{\Big|}\;$};
    		\node [style=empty] (equals) at (5.5, 1) {$=$};
            \node [style=morphism] (tau2) at (2.25, -0.2) {$\;\phantom{\Big|}\tau_2\phantom{\Big|\;}$};
            \node [style=empty, red] (b'') at (0, -2) {$b$};
            \node [style=empty, red] (b''') at (1.5, -2) {$b$};
            \node [style=empty] (e) at (3, -2) {$e$};
            \node [style=empty] (e') at (4.5, -2) {$e$};
    	\end{pgfonlayer}
    	\begin{pgfonlayer}{edgelayer}
    		\draw [line width=1.5 pt, red] (b) to (tau2);
    		\draw [line width=1.5 pt, red] (b') to (tau1);
    		\draw [line width=1.5 pt] (d) to (tau1);
    		\draw [line width=1.5 pt] (d') to (tau2);
    		\draw [line width=1.5 pt, red] (tau1) to (b'');
    		\draw [line width=1.5 pt] (tau1) to (e');
    		\draw [line width=1.5 pt, red] (tau2) to (b''');
    		\draw [line width=1.5 pt] (tau2) to (e);
    	\end{pgfonlayer}
    \end{tikzpicture}
    }
      \scalebox{0.8}{
    \begin{tikzpicture}
    	\begin{pgfonlayer}{nodelayer}
    		\node [style=empty, red] (b) at (0, 4) {$b$};
    		\node [style=empty, red] (b') at (1.5, 4) {$b$};
    		\node [style=empty] (d) at (3, 4) {$d$};
    		\node [style=empty] (d') at (4.5, 4) {$d$};
    		\node [style=morphism] (tau1) at (2.5, 2.2) {$\;\phantom{\Big|}\tau_2\phantom{\Big|}\;$};
            \node [style=morphism] (tau2) at (2.5, -0.2) {$\;\phantom{\Big|}\tau_2\phantom{\Big|\;}$};
            \node [style=empty, red] (b'') at (0, -2) {$b$};
            \node [style=empty, red] (b''') at (1.5, -2) {$b$};
            \node [style=empty] (e) at (3.5, -2) {$e$};
            \node [style=empty] (e') at (4.75, -2) {$e$};
    	\end{pgfonlayer}
    	\begin{pgfonlayer}{edgelayer}
    		\draw [line width=1.5 pt, red] (b) to (tau2);
    		\draw [line width=1.5 pt, red] (b') to (tau1);
    		\draw [line width=1.5 pt, bend left=45, looseness=1] (d) to (tau2);
    		\draw [line width=1.5 pt] (d') to (tau1);
    		\draw [line width=1.5 pt, red] (tau1) to (b'');
    		\draw [line width=1.5 pt] (tau1) to (e');
    		\draw [line width=1.5 pt, red] (tau2) to (b''');
    		\draw [line width=1.5 pt] (tau2) to (e);
    	\end{pgfonlayer}
    \end{tikzpicture}
    }
    \]
because these morphisms differ only by two people on the shore switching place before they board the boats, and the braiding of non-catalyst species is the identity.  In short, the $\int G$ construction implements the individual token philosophy only for catalyst tokens; tokens of other species are governed by the collective token philosophy.
\end{example}

\section{Premonoidal Categories}

We have seen that for a Petri net $P$, a choice of catalysts $C$ lets us write the category $FP$ as a coproduct of subcategories $FP_i$, one for each possible amount $i \in \N[C]$ of the catalysts. The subcategory $FP_i$ is only a \emph{monoidal} subcategory when $i = 0$. Indeed, only $FP_0$ contains the monoidal unit of $FP$.  However, we shall see that each subcategory $FP_i$ can be given the structure of a \emph{premonoidal} category, as defined by Power and Robinson \cite{PowerRobinson}.  We motivate our use of this structure by describing two failed attempts to make $FP_i$ into a monoidal category.

Given two morphisms in $FP_i$ we typically cannot carry out these two processes simultaneously, because of the limited availability of catalysts. But we can do first one and then the other.  For example, imagine that two people are trying to walk through a doorway, but the door is only wide enough for one person to walk through. The door is a resource that is not depleted by its use, and thus a catalyst. Both people can use the door, but not at the same time: they must make an arbitrary choice of who goes first. 

We can attempt to define a tensor product on $FP_i$ using this idea.  Fix some amount of catalysts $i \in \N[C]$. Objects of $FP_i$ are of the form $i + a$ with $a \in \N[S - C]$.  On objects we define
\[(i + a) \otimes_i (i + a' ) = i +  a + a'.\]
The unit object for $\otimes_i$ is therefore $i + 0$, or simply $i$. For morphisms
\begin{align*} 
    f &\maps i + a \to i + b \\ 
    f' &\maps i +  a' \to i + b'
\end{align*}
we define 
\[
    f \otimes_i f' = (f + 1_{b'}) \circ (1_{a} + f').
\]

The tensor product
$f \otimes_i f' = (f + 1_{b'}) \circ (1_{a} + f') $
of morphisms in $FP_i$ involves an arbitrary choice: namely, the choice to do $f'$ first. This is perhaps clearer if we draw this morphism as a string diagram in $FP$.
\[
\scalebox{1}{
\begin{tikzpicture}
	\begin{pgfonlayer}{nodelayer}
		\node [style=empty, red] (i) at (0, -3) {$i$};
		\node [style=empty] (a) at (1.5, 1.5) {$a$};
		\node [style=empty] (a') at (3.5, 1.5) {$a'$};
		\node [style=empty] (m) at (2.5, -0.75) {};
		\node [style=morphism] (f) at (1.5, -1.5) {$\;\phantom{\Big|}f\phantom{\Big|}\;$};
        \node [style=morphism] (f') at (3.5, 0) {$\;\phantom{\Big|}f'\phantom{\Big|\;}$};
        \node [style=empty] (b) at (1.5, -3) {$b$};
        \node [style=empty] (b') at (3.5, -3) {$b'$};
        \node [style=empty, red] (i') at (5, 1.5) {$i$};
	\end{pgfonlayer}
	\begin{pgfonlayer}{edgelayer}
		\draw [line width=1.5 pt, bend left = 40, looseness=1, red] (i') to (f');
		\draw [line width=1.5 pt] (a') to (f');
		\draw [line width=1.5 pt] (a) to (f);
		\draw [line width=1.5 pt, bend right = 25, red] (f) to (m.center);
		\draw [line width=1.5 pt, bend left = 25, red] (m.center) to (f');
		\draw [line width=1.5 pt] (f') to (b');
		\draw [line width=1.5 pt, bend right = 40, looseness=1, red] (f) to (i);
		\draw [line width=1.5 pt] (f) to (b);
	\end{pgfonlayer}
\end{tikzpicture}
}
\]
If instead we choose to do $f$ first, we can define a tensor product ${}_i \otimes$ which is the same on objects but given on morphisms by
\[     
    f \, {}_i \!\otimes f' = (1_b + f') \circ (f + 1_{a'})  .
\]
It looks like this:
\[
\scalebox{1}{
\begin{tikzpicture}
	\begin{pgfonlayer}{nodelayer}
		\node [style=empty, red] (i) at (0, 1.5) {$i$};
		\node [style=empty] (a) at (1.5, 1.5) {$a$};
		\node [style=empty] (a') at (3.5, 1.5) {$a'$};\node [style=empty] (m) at (2.5, -0.75) {};
		\node [style=morphism] (f) at (1.5, 0) {$\;\phantom{\Big|}f\phantom{\Big|}\;$};
        \node [style=morphism] (f') at (3.5, -1.5) {$\;\phantom{\Big|}f'\phantom{\Big|\;}$};
        \node [style=empty] (b) at (1.5, -3) {$b$};
        \node [style=empty] (b') at (3.5, -3) {$b'$};
        \node [style=empty, red] (i') at (5, -3) {$i$};
	\end{pgfonlayer}
	\begin{pgfonlayer}{edgelayer}
		\draw [line width=1.5 pt, bend right = 40, looseness=1, red] (i) to (f);
		\draw [line width=1.5 pt] (a') to (f');
		\draw [line width=1.5 pt] (a) to (f);
		\draw [line width=1.5 pt, bend left = 25, red] (f) to (m.center);
		\draw [line width=1.5 pt, bend right = 25, red] (m.center) to (f');
		\draw [line width=1.5 pt] (f') to (b');
		\draw [line width=1.5 pt, bend left = 40, looseness=1, red] (f') to (i');
		\draw [line width=1.5 pt] (f) to (b);
	\end{pgfonlayer}
\end{tikzpicture}
}
\]
Unfortunately, neither of these tensor products makes $FP_i$ into a monoidal category!
Each makes the set of objects $\ob(FP_i)$ and the set of morphisms $\mor(FP_i)$ 
into a monoid in such a way that the source and target maps $s,t \maps \mor(FP_i) \to
\ob(FP_i)$, as well as the identity-assigning map $i \maps \ob(FP_i) \to \mor(FP_i)$, are
monoid homomorphisms.  The problem is that neither obeys the interchange law, so
neither of these tensor products defines a functor from $FP_i \times FP_i$ to $FP_i$.   For 
example, 
\[     (1 \otimes_i f')\circ (f \otimes_i 1) \ne (f \otimes_i 1) \circ (1 \otimes_i f') .\]
The other tensor product suffers from the same problem.

What is going on here?  It turns out that $FP_i$ is a `strict premonoidal category'.   While these structures first arose in computer science \cite{PowerRobinson}, they are also mathematically natural, for the following reason.  There are only two symmetric monoidal closed structures on $\Cat$, up to isomorphism \cite{FKL}.  One is the the cartesian product.  The other is the `funny tensor product' \cite{Weber}.   A monoid in $\Cat$ with its cartesian product is a strict monoidal category, but a monoid in $\Cat$ with its funny tensor product is a strict premonoidal category.   
The \define{funny tensor product} $\C \square \D$ of categories $\C$ and $\D$ is defined as the following pushout in $\Cat$:
  \[\begin{tikzcd}
        \C_0 \times \D_0
        \arrow[d, "i \times 1"]   \arrow[r, "1 \times j"]
        &
        \C_0 \times \D
        \arrow[d]
        \\
        \C \times \D_0 
        \arrow[r]
        &
        \C \square \D
  \end{tikzcd}\]
Here $\C_0$ is the subcategory of $\C$ consisting of all the objects and only identity
morphisms, $i \maps \C_0 \to \C$ is the inclusion, and similarly for $j \maps \D_0 \to \D$.
Thus, given morphisms $f \maps x \to y$ in $\C$ and $f' \maps x' \to y'$ in $\C$, the category $C \square D$ in contains a square of the form
\[    \begin{tikzcd}
        x \square x'
        \arrow[d, swap, "f \square 1"]   \arrow[r, "1 \square f' "]
        &
        x \square y'
        \arrow[d,  "f \square 1"]
        \\
        x' \square y
        \arrow[r, swap, "1 \square f' "]
        &
        x' \square y',
  \end{tikzcd}\]
but in general this square does not commute, unlike the corresponding square in
$\C \times \D$.

\begin{defn}
A \define{strict premonoidal category} is a category $\C$ equipped with a functor
$\boxtimes \maps \C \square \C \to \C$ that obeys the associative law and an object $I \in \C$ that serves as a left and right unit for $\boxtimes$. 
\end{defn}

Given two morphisms $f \maps x \to y$, $f' \maps x' \to y'$ in a strict premonoidal category $\C$ we obtain a square
 \[    \begin{tikzcd}
        x \boxtimes x'
        \arrow[d, swap, "f \boxtimes 1"]   \arrow[r,  "1 \boxtimes f' "]
        &
        x \boxtimes y'
        \arrow[d,  "f \boxtimes 1"]
        \\
        x' \boxtimes y
        \arrow[r, swap, "1 \boxtimes f' "]
        &
        x' \boxtimes y',
  \end{tikzcd}\]
but this square may not commute.  There are thus two candidates for a morphism from
$x \boxtimes x'$ to $y \boxtimes y'$.  When these always agree, we can make $\C$
monoidal by setting $f \boxtimes f'$ equal to either (and thus both) of these candidates.   We shall give $FP_i$ a strict premonoidal structure where these two candidates do not agree: one is $f \otimes_i f'$ while the other is $f \, {}_i \! \otimes f'$.  This explains the meaning of these two failed attempts to give $FP_i$ a monoidal structure.

Thanks to the description of $\C \square \C$ as a pushout, to know the tensor product $\boxtimes$ in a strict premonoidal category $\C$ it suffices to know $x \boxtimes y$, $x \boxtimes f$ and $f \boxtimes y$ for all objects $x,y$ and morphisms $f$ of $\C$.  (Here we find it useful to write
$x \boxtimes f$ for $1_x \boxtimes f$ and $f \boxtimes y$ for $f \boxtimes 1_y$.)  In the case at hand, we define
\[    \boxtimes_i \maps FP_i \square FP_i \to FP_i \]
on objects by setting
\[(i + a) \boxtimes_i (i + a' ) = i +  a + a'\]
for all $a,a' \in \N[S-C]$, while for morphisms
\begin{align*} 
    f &\maps i + a \to i + b \\ 
    f' &\maps i +  a' \to i + b'
\end{align*}
we set
\[
	a \boxtimes f' = f' + 1_a, \qquad
     f \boxtimes a' = f + 1_{a'}.
\]

\begin{prop}
\label{prop:monoidal}
    The tensor product $\boxtimes_i$ makes $FP_i$ into a strict premonoidal category.
\end{prop}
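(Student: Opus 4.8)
The plan is to verify the three ingredients in the definition of a strict premonoidal category: that $\boxtimes_i$ is a well-defined functor $FP_i \square FP_i \to FP_i$, that it obeys the associative law, and that $I := i + 0$ serves as a strict two-sided unit. Throughout, the only properties of $FP$ we use are those of a strict monoidal category — the associator and unitors are identities, and $1_a + 1_b = 1_{a+b}$ — together with the interchange law for the bifunctor $+$; all of these hold because $FP \in \CMC$.

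For the first point I would build $\boxtimes_i$ out of its one-variable components using the pushout that defines $\square$: a functor $FP_i \square FP_i \to FP_i$ is precisely a pair of functors $FP_i \times (FP_i)_0 \to FP_i$ and $(FP_i)_0 \times FP_i \to FP_i$ agreeing on $(FP_i)_0 \times (FP_i)_0$. For each $a' \in \N[S-C]$ define $R_{a'} \maps FP_i \to FP_i$ on objects by $i+a \mapsto i+a+a'$ and on morphisms by $f \mapsto f + 1_{a'}$ (these land in $FP_i$ since $a, a' \in \N[S-C]$, so $\pi_C$ is unchanged); this is a functor because $1_{i+a}+1_{a'} = 1_{i+a+a'}$ and, by interchange, $(g+1_{a'})\circ(f+1_{a'}) = (g\circ f)+1_{a'}$. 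Symmetrically define $L_a$ by $f' \mapsto f' + 1_a$. Since $(FP_i)_0$ has no nonidentity morphisms, the families $(R_{a'})$ and $(L_a)$ are exactly the two required functors, and they agree on $(FP_i)_0 \times (FP_i)_0$: both send $(i+a, i+a')$ to $i+a+a'$ and every pair of identities to an identity. The pushout property then yields a unique functor $\boxtimes_i$ with $f \boxtimes a' = f+1_{a'}$ and $a \boxtimes f' = f'+1_a$.

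The remaining axioms are then routine. For associativity it suffices to check the equality on objects and on the single-slot morphisms $f \boxtimes y \boxtimes z$, $\,x \boxtimes g \boxtimes z$, $\,x \boxtimes y \boxtimes h$, since these generate the relevant iterated funny tensor product as a category; on objects both bracketings give $i+a+a'+a''$, and on a morphism in (say) the first slot one bracketing gives $(f+1_{a'})+1_{a''}$ while the other gives $f + 1_{a'+a''}$, and these agree because the associator of $FP$ is trivial and $1_{a'}+1_{a''} = 1_{a'+a''}$; the other two slots are symmetric. For the unit, $I = i+0$ acts as the identity on objects, and on morphisms $I \boxtimes_i f' = f' + 1_0 = f'$ and $f \boxtimes_i I = f + 1_0 = f$ because $1_0 = 1_I$ and the unitors of $FP$ are trivial.

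The only real content is the functoriality of $R_{a'}$ and $L_a$, which rests on the interchange law in $FP$ — precisely the law that the two-variable operations $\otimes_i$ and ${}_i\!\otimes$ violate. I expect this to be the conceptual crux: the pushout presentation of $\square$ is exactly what lets us avoid demanding interchange in the second variable, so $\boxtimes_i$ assembles into a functor even though $\otimes_i$ does not; once that is in hand, every verification collapses to the strictness identities of the commutative monoidal category $FP$.
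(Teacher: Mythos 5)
Your proof is correct, but it takes a different route from the paper's. The paper's proof does not carry out the verification: it observes that $FP_i$ is a special case of Power and Robinson's ``linear state passing'' construction, which produces a premonoidal category $C_i$ from any object $i$ of any symmetric monoidal category $C$, and then notes that because $FP$ is \emph{commutative} monoidal all the coherence isomorphisms of $C_i$ are identities, so the premonoidal structure is strict. (The paper does remark that the statement ``can be checked directly,'' which is exactly what you do.) Your argument assembles $\boxtimes_i$ from the one-variable functors $R_{a'} \maps f \mapsto f + 1_{a'}$ and $L_a \maps f' \mapsto f' + 1_a$ via the universal property of the pushout defining $\square$, checks that each is a functor using the interchange law of $FP$, and then reduces associativity and the unit laws to the strictness identities of the commutative monoidal category $FP$; all of these steps are sound, and the reduction of associativity to single-slot generators is legitimate since morphisms of the iterated funny tensor product are generated by them. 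What your approach buys is a self-contained proof that pinpoints exactly where interchange in $FP$ is used (functoriality of $R_{a'}$ and $L_a$) and why the pushout presentation of $\square$ is what allows $\boxtimes_i$ to be a functor even though the two-variable operations $\otimes_i$ and ${}_i\!\otimes$ fail interchange; what the paper's approach buys is brevity, a pointer into the literature, and the general non-strict statement for an arbitrary symmetric monoidal ambient category.
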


\begin{proof}
This can be checked directly, but this is also a special case of a construction in Power and Robinson's paper on premonoidal categories \cite[Ex.\ 3.4]{PowerRobinson}.  They describe a construction, sometimes called `linear state passing' \cite{MogelbergStaton}, that takes any object $i$ in any symmetric monoidal category $C$ and yields a premonoidal category $C_i$ where objects are of the form $i \otimes c$ for $c \in C$ and morphisms are morphisms in $C$ of the form $f \maps i \otimes c \to i \otimes c'$.  We are considering the special case where $C = FP$, and because $FP$ is commutative monoidal the resulting premonoidal category is strict: all the coherence isomorphisms are identities.
\end{proof}

Finally, we show that the tensor products $\boxtimes_i$ on the categories $FP_i$ let us lift our network model $G$ from $\Cat$ to the category of strict premonoidal categories.

\begin{defn}
Let $\PreMonCat$ be the category of strict premonoidal categories and \define{strict premonoidal functors}, meaning functors between strict premonoidal categories that strictly preserve the tensor product.  Let $U \maps \PreMonCat \to \Cat$ denote the forgetful functor which sends a strict premonoidal category to its underlying category.
\end{defn}

\begin{thm}
\label{thm:lift}
    The network model $G \maps \S(C) \to \Cat$ lifts to a functor $\hat G \maps \S(C) \to \PreMonCat$:
    \[\begin{tikzcd}
        &
        \PreMonCat
        \arrow[d, "U"]
        \\
        \S(C)
        \arrow[r, "G", swap]
        \arrow[ur, "\hat G"]
        &
        \Cat
    \end{tikzcd}\]
    where $\hat G(x) = FP_{p(x)}$ with the strict premonoidal structure described in Prop.~\ref{prop:monoidal}.
\end{thm}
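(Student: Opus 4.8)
The plan is to define $\hat G$ in the essentially unique way compatible with $U \hat G = G$, and then to observe that the only nontrivial verification has already been carried out in Prop.~\ref{prop:monoidal}. On objects we are forced to take $\hat G(x) = FP_{p(x)}$ with the strict premonoidal structure $\boxtimes_{p(x)}$ furnished by that proposition; this is why Prop.~\ref{prop:monoidal} is the real content behind the theorem. The crucial bookkeeping observation is that $p \maps \S(C) \to \N[C]$ sends every morphism to an identity, so whenever there is a morphism $\sigma \maps x \to x'$ in $\S(C)$ one has $p(x) = p(x')$ as elements of $\N[C]$, hence $FP_{p(x)} = FP_{p(x')}$ \emph{on the nose} and $G(\sigma) = 1_{FP_{p(x)}}$ by the construction of $G$.

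Accordingly I would set $\hat G(\sigma) = 1_{FP_{p(x)}}$ for every morphism $\sigma \maps x \to x'$ of $\S(C)$. The identity functor on any strict premonoidal category strictly preserves the tensor product and the unit object, so it is a strict premonoidal functor; thus $\hat G(\sigma)$ is a bona fide morphism of $\PreMonCat$ from $\hat G(x)$ to $\hat G(x')$, and by construction $U\hat G = G$, so the triangle commutes. It then remains to check that $\hat G$ is a functor. This is automatic: $U$ is faithful (a strict premonoidal functor is a functor with a property, so $U$ is injective on hom-sets), and $G$ is a functor, so from $U\bigl(\hat G(\tau) \circ \hat G(\sigma)\bigr) = G(\tau)\circ G(\sigma) = G(\tau\circ\sigma) = U\bigl(\hat G(\tau\circ\sigma)\bigr)$ and the analogous equation for units one concludes $\hat G(\tau\circ\sigma) = \hat G(\tau)\circ\hat G(\sigma)$ and $\hat G(1_x) = 1_{\hat G(x)}$. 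Alternatively, since all the functors in sight are identities, functoriality is immediate from the fact that a composite of identity functors is an identity functor.

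I do not anticipate any genuine obstacle: once Prop.~\ref{prop:monoidal} is in hand, the theorem is pure bookkeeping. The single point that deserves a sentence of care is the observation above that $p$ equates the source and target of every morphism of $\S(C)$ --- this is exactly what makes ``$\hat G$ on morphisms $=$ identity functor'' type-check as a strict premonoidal functor between the \emph{prescribed} premonoidal categories $FP_{p(x)}$ and $FP_{p(x')}$. Note also that the statement only asks for a lift as a plain functor, so no symmetric monoidal compatibility of $\hat G$ needs to be addressed.
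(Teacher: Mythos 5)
Your proposal is correct and is essentially the paper's own (very terse) argument, expanded: the paper simply notes that since $G$ sends every morphism of $\S(C)$ to an identity functor, $\hat G$ must do the same, and your write-up supplies exactly the bookkeeping (that $p$ identifies source and target of every morphism, so the identity functor type-checks as a strict premonoidal functor, and functoriality follows) that makes this one-liner rigorous.
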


\begin{proof}
    Since $G$ sends each morphism in $\S(C)$ to an identity functor, so must $\hat G$. 
\end{proof}

\section{Conclusions}

A couple of mathematical questions arise naturally from this work. First, is there a string diagram calculus for premonoidal categories, like that for monoidal categories but omitting the interchange law?  This is hinted at in the work of Jeffrey \cite{Jeffrey}, but ideally there would be a theorem justifying the use of such string diagrams just as Joyal and Street \cite{JoyalStreet} justified the use of planar progressive string diagrams for monoidal categories.  We could then omit the red lines in our string diagrams and treat the resulting diagrams as describing morphisms in premonoidal categories.

Second, is there a monoidal functor from $(\Cat, \square)$ to $(\Cat, \times)$?   If so, we could turn a strict premonoidal category into a strict monoidal category just by applying this functor. Ideally this would impose the interchange law on the tensor product, forcing all squares of the form
 \[    \begin{tikzcd}
        x \boxtimes x'
        \arrow[d, swap, "f \boxtimes 1"]   \arrow[r,  "1 \boxtimes f' "]
        &
        x \boxtimes y'
        \arrow[d,  "f \boxtimes 1"]
        \\
        x' \boxtimes y
        \arrow[r, swap, "1 \boxtimes f' "]
        &
        x' \boxtimes y'
  \end{tikzcd}\]
to commute.  This could be useful in applications where we do not care which of two processes uses a catalyst first.

\acknowledgments{
This work was supported by the DARPA Complex Adaptive System Composition and Design Environment (CASCADE) project, and we thank Chris Boner, Tony Falcone, Marisa Hughes, Joel Kurucar, Jade Master, Tom Mifflin, John Paschkewitz, Thy Tran and Didier Vergamini for helpful discussions.  We thank Stefano Gogioso for pointing out a problem with a previous version of this paper, and Sam Staton and Noam Zeilberger for helping solve it.  JB also thanks the Centre for Quantum Technologies, where some of this work was done.
}

\bibliographystyle{plainnat}

\end{document}